\newtheorem{theorem}{Theorem}[section]
\newtheorem{cor}[theorem]{Corollary}
\newtheorem{lemma}[theorem]{Lemma}
\newtheorem{conj}[theorem]{Conjecture}
\newtheorem{theo}[theorem]{Theorem}
\newtheorem{lem}[theorem]{Lemma}
\newtheorem{pro}[theorem]{Proposition}
\newtheorem{rem}[theorem]{Remark}
\newtheorem{exa}[theorem]{Example}
\newtheorem{que}[theorem]{Question}
\newtheorem*{Definition*}{Definition}
\def\qed{\hfill \ifhmode\unskip\nobreak\fi\quad\ifmmode\Box\else$\Box$\fi\\ }
\begin{document}

\title[Kosniowski's conjecture and weights]{Kosniowski's conjecture and weights}
\author{Donghoon Jang}
\thanks{Donghoon Jang is supported by Basic Science Research Program through the National Research Foundation of Korea(NRF) funded by the Ministry of Education(2018R1D1A1B07049511).}
\address{Department of Mathematics, Pusan National University, Pusan, Korea}
\email{donghoonjang@pusan.ac.kr}

\begin{abstract}
The conjecture of Kosniowski asserts that if the circle acts on a compact unitary manifold $M$ with a non-empty fixed point set and $M$ does not bound a unitary manifold equivariantly, then the dimension of the manifold is bounded above by a linear function on the number of fixed points. We confirm the conjecture for almost complex manifolds under an assumption on weights. For instance, we confirm the conjecture if there is one type $\pm a$ of weights, or there are two types $\pm a$, $\pm b$ of weights.
\end{abstract}

\maketitle

\tableofcontents

\section{Introduction and statements of main results}

$\indent$

The main purpose of this paper is to consider Kosniowski's conjecture for almost complex manifolds, in terms of the number of types of weights. The conjecture of Kosniowski asserts that if the circle acts on a compact unitary manifold $M$ with a non-empty fixed point set and $M$ does not bound a unitary manifold equivariantly, then there is a relationship between the dimension of the manifold and the number of fixed points. Kosniowski conjectures that the dimension of $M$ is bounded above by a (linear) function $f(k)$ on the number $k$ of fixed points. He conjectures further that $f(k)=4k$. 

\begin{conj} [Kosniowski] \label{co1} \cite{K2} Let the circle act on a compact unitary manifold $M$ with a fixed point. Suppose that $M$ does not bound a unitary manifold equivariantly. Then $\dim M < 4k$, where $k$ is the number of fixed points. Alternatively, there exist at least $\lfloor \frac{\dim M}{4} \rfloor+1$ fixed points. \end{conj}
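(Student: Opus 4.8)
The plan is to translate the conjecture into a statement about the integer weights at the fixed points, and then to extract the linear bound by combining localization with rigidity. First I would reduce to the case of isolated fixed points: since the hypothesis controls the \emph{number} $k$ of fixed points, a positive-dimensional fixed component would make $k$ infinite and the bound vacuous, so the content is entirely in the isolated case, where $M^{S^1}$ is finite and $k=|M^{S^1}|$. At each fixed point $p$ the (stable) tangent representation decomposes into one-dimensional weight spaces, giving a multiset of nonzero integers $w_1(p),\dots,w_n(p)$ with $\dim M=2n$; by the Conner--Floyd and tom Dieck description of $\Omega^{U,S^1}_{*}$ in terms of fixed-point normal data, the entire equivariant cobordism class of $M$ — and in particular whether $M$ bounds equivariantly — is encoded in the family $\{(w_1(p),\dots,w_n(p))\}_p$.

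The engine is the Atiyah--Bott--Berline--Vergne localization theorem. Applied to equivariant classes of equivariant degree less than $\dim M$, degree reasons force the vanishing relations
\begin{equation}\label{eq:loc}
\sum_{p\in M^{S^1}}\frac{f\bigl(w_1(p),\dots,w_n(p)\bigr)}{\prod_{j=1}^{n}w_j(p)}=0
\qquad\text{for every symmetric polynomial } f \text{ of degree} <n,
\end{equation}
while the top-degree classes recover the Chern numbers. The non-bounding hypothesis enters exactly here: were $M$ an equivariant boundary, its fixed-point data would be null-cobordant and every residue built from the $\deg f\ge n$ analogues of \eqref{eq:loc} would vanish, so under our hypothesis at least one such residue is nonzero. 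I would package the data through the counting function $N_i=\#\{p: p \text{ has exactly } i \text{ negative weights}\}$, with $k=\sum_{i=0}^{n}N_i$. Rigidity of the signature gives $\mathrm{sign}(M)=\sum_{i}(-1)^iN_i$, and rigidity of the equivariant elliptic genus gives further functional relations forcing $(N_0,\dots,N_n)$ to be symmetric, $N_i=N_{n-i}$, together with Morse-type inequalities. (In the tangentially almost complex case one has $\chi(M)=c_n[M]=k\ne 0$, so there the non-bounding hypothesis is automatic and the whole content is this lower bound on $k$; the genuine subtlety of the unitary statement is to exploit non-bounding precisely when $\chi$ carries no cobordism information.) The target of this step is to show that a nonzero residue cannot be concentrated on too few fixed points: each identity in \eqref{eq:loc} is a linear constraint, and a surviving invariant should force the support of the $N_i$ to meet at least $\lfloor n/2\rfloor+1$ distinct indices, which is exactly $k\ge\lfloor\dim M/4\rfloor+1$.

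The heart of the matter, and the step I expect to be the genuine obstacle, is making this counting uniform in the magnitudes of the weights. The relations \eqref{eq:loc} form an infinite family of rational-function identities in integer parameters subject to no a priori size bound, so a nonzero residue can in principle be spread across many fixed points with wildly different weights in a way that no finite set of linear constraints detects. My strategy to break this would be a minimal-counterexample argument: assume $M^{2n}$ is non-bounding with $2n\ge 4k$ and $k$ minimal, normalize by rescaling the circle to clear common factors of the weights and by restricting to a cyclic subgroup $\mathbb{Z}_m\subset S^1$ that isolates a prime-order part of the data, and use the Conner--Floyd residue relations to produce from $M$ a strictly smaller non-bounding configuration with the same defect $2n-4k$, contradicting minimality. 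The delicate point is that these reduction moves must lower $k$ while simultaneously preserving both non-bounding and the defect; an unbounded spread of weight values can hide the obstruction so that no single reduction untangles it, which is exactly why bounding the number of distinct weight magnitudes collapses \eqref{eq:loc} to finitely many solvable residue relations and lets the reduction go through. Carrying out the reduction with no such restriction — that is, obtaining a bound uniform in the weights — is the crux on which a proof of the full conjecture turns.
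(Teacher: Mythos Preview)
The statement you are addressing is Conjecture~\ref{co1}, which is \emph{open}; the paper does not prove it and presents it explicitly as a conjecture. What the paper actually establishes are the partial results Theorem~\ref{t16}, Theorem~\ref{t32}, and Theorem~\ref{t113}, each under a strong restriction on the set of weight types. So there is no ``paper's own proof'' to compare against, and your proposal cannot be a correct proof of the stated result unless it resolves an open problem.

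Your outline does not do that. The localization identities and the rigidity facts you invoke are real and are exactly what the paper packages as Theorem~\ref{t20} and Lemma~\ref{l21}: they give $N_i=N_{n-i}$ and the existence of some $i$ with $N_i,N_{i+1}$ both nonzero. But your central assertion --- that a nonzero residue ``should force the support of the $N_i$ to meet at least $\lfloor n/2\rfloor+1$ distinct indices'' --- is precisely the content of the conjecture, and nothing in the preceding steps establishes it. The relations \eqref{eq:loc} are linear in the fixed-point contributions but highly nonlinear in the weights, and there is no known mechanism that converts them into a uniform lower bound on $\#\{i:N_i\neq 0\}$ independent of the weight data. Your minimal-counterexample reduction via restriction to $\mathbb{Z}_m\subset S^1$ is exactly the move the paper makes in proving Theorems~\ref{t16} and \ref{t32}, and it works there only because the hypothesis on weight types guarantees that some $M^{\mathbb{Z}_{w_i}}$ is large enough (so Lemma~\ref{l22} supplies exponentially many fixed points) or that one weight type dominates (so Lemma~\ref{l31} applies). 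Without that hypothesis the reduction does not preserve non-bounding together with the defect $2n-4k$ in any way currently known; you yourself flag this as ``the crux on which a proof of the full conjecture turns,'' which is an accurate assessment --- but it means what you have written is a description of the difficulty, not a proof.
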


Here, $\lfloor u \rfloor$ is the greatest integer less than or equal to $u$ for each real number $u$. In \cite{K2}, Kosniowski says that he has some clue for Conjecture \ref{co1}, but the clue has not been spelled out, to the author's knowledge. There are various results which are related to Kosniowski's conjecture; see \cite{CKP}, \cite{GPS}, \cite{LL}, \cite{LL2}, \cite{LT}, \cite{L2}, \cite{PT}, etc. However, most of them do not confirm Conjecture \ref{co1} in an arbitrary dimension under their additional assumptions. Moreover, most results show different bounds than the original conjecture. Many bounds of them involve characteristic numbers; Chern numbers for (stable) almost complex manifolds and Pontryagin numbers for oriented manifolds. Many of the results involving characteristic numbers share the same method of utilizing a Vandermonde matrix which is first used in \cite{PT}.

In this paper, we attempt to approach Conjecture \ref{co1} for almost complex manifolds, in terms of isotropy weights of the action. The almost complex manifold version of Kosniowski's conjecture can be stated as follows.

\begin{conj} [Kosniowski] \label{co2} \cite{K2} Let the circle act on a compact almost complex manifold $M$ with a fixed point. Then $\dim M < 4k$, where $k$ is the number of fixed points. Alternatively, there exist at least $\lfloor \frac{\dim M}{4} \rfloor+1$ fixed points. \end{conj}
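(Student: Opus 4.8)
The plan is to attack the conjecture through the isotropy weight data at the fixed points. It suffices to treat the case that the fixed points are isolated, since otherwise $k=\infty$ and there is nothing to prove. Writing $2n=\dim M$, the circle acts on each tangent space $T_pM\cong\mathbb{C}^{n}$ with nonzero integer weights $w_1(p),\dots,w_n(p)$, and the whole argument is organized around the multiset of these $nk$ integers. I would first record the two elementary reductions that make the weights the right bookkeeping device: rescaling so that the greatest common divisor of all weights is $1$, and passing to the induced $\mathbb{Z}_m$-action for suitable $m$. These let one phrase everything in terms of the distinct \emph{types} $\pm a$ of weights that actually occur, and in terms of $N_i$, the number of fixed points having exactly $i$ negative weights, so that $k=\sum_{i=0}^{n}N_i$.

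The main engine is the Atiyah--Bott--Berline--Vergne localization formula applied to the equivariant Todd genus and, more generally, to the equivariant $\chi_y$-genus. Localization of the Todd genus gives the rational-function identity
\[
\sum_{p}\ \prod_{j=1}^{n}\frac{1}{1-t^{\,w_j(p)}}=\mathrm{Td}(M),
\]
which by rigidity of the $\chi_y$-genus is constant in $t$. Letting $t\to 0$ and $t\to\infty$ yields $N_0=N_n=\mathrm{Td}(M)$, and the requirement that every pole at a root of unity cancel produces further relations among the $N_i$ and among the multiplicities of each weight value; the $\chi_y$-genus refines these into a full system of constraints in an auxiliary variable. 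Since $\dim M<4k$ is equivalent to $k>n/2$, the goal becomes to show that this system forces the $k=\sum_i N_i$ fixed points to be distributed among the indices $i$ in a way that is never too sparse.

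To extract the bound I would run an induction on the number of types of weights. The key structural point is that the behaviour of the localization identity at a primitive $m$-th root of unity is governed precisely by the fixed points whose weights are divisible by $m$, that is, by the fixed-point data of the $\mathbb{Z}_m$-subaction, which carries a residual circle action on a lower-dimensional manifold with fewer types of weights. Isolating these residues and reapplying the same machinery to the reduced data should feed a recursion. When only a single type $\pm a$ occurs, rescaling makes all weights $\pm1$, the rational function has a pole only at $t=1$, and matching it against $\mathrm{Td}(M)(1-t)^{-n}$ pins down the $N_i$ outright and yields a bound far stronger than $k>n/2$; when two types $\pm a,\pm b$ occur, the poles live only at the $a$-th and $b$-th roots of unity, and the argument becomes a longer but still finite case analysis of the possible pole structures.

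The hard part, and the reason the full conjecture remains out of reach by this route, is exactly the passage from boundedly many types to arbitrarily many. As the number of distinct weight values grows, the localization identity develops poles at many independent roots of unity, the $\mathbb{Z}_m$-recursion branches without evident control, and I do not see a uniform way to convert the resulting web of residue relations into the single sharp linear inequality $k>n/2$. Closing this gap — whether by producing a monovariant that provably decreases under each $\mathbb{Z}_m$-reduction, or by a global argument that bypasses the induction on types altogether — is the central obstacle, and it is why the confirmations obtained here are stated under the hypothesis that only one or two types of weights occur.
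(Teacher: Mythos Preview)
First, note that the statement is a \emph{conjecture}: the paper does not prove it in general, only under hypotheses on the weight types (Proposition~\ref{t15}, Theorem~\ref{t16}, Theorems~\ref{t32} and~\ref{t113}). Your proposal is likewise an outline toward partial results and explicitly flags the obstruction to the full conjecture, so there is no disagreement on scope. At the structural level your plan and the paper's coincide: reduce to isolated fixed points, use rigidity of the $\chi_y$-genus for the symmetry $N_i=N_{n-i}$, exploit $\mathbb{Z}_m$-isotropy submanifolds to recurse on the number of weight types, and handle the single-type case by a direct computation (your pole-matching at $t=1$ is equivalent to Lemma~\ref{l22}, giving $N_i=l\binom{n}{i}$).

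The gap is in the two-type case. You say only that the poles sit at $a$-th and $b$-th roots of unity and that a ``finite case analysis of the possible pole structures'' should work, without indicating how the inequality $k>n/2$ actually emerges. The paper's proof of Theorem~\ref{t16} is concrete and uses a different mechanism: a dimension dichotomy on the $\mathbb{Z}_b$-isotropy component $Z$ (with $b>a$). If $8\dim Z>\dim M$ then the single-type bound $|Z^{S^1}|\ge 2^{\dim Z/2}$ already suffices. If $8\dim Z\le\dim M$ then at every fixed point the smaller weight $\pm a$ occurs at least $\tfrac{3}{8}\dim M$ times, and Lemma~\ref{l31} applies. That lemma in turn rests on Lemma~\ref{l21}, a balancing statement for the \emph{smallest} positive weight: the multiplicity of $+w$ at fixed points with $i$ negative weights equals that of $-w$ at fixed points with $i+1$ negative weights, which lets one propagate the existence of fixed points across consecutive values of $n_p$. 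This smallest-weight propagation is the paper's key combinatorial tool and is absent from your outline; your residue analysis might in principle recover something equivalent, but as written the two-type argument is not a proof.
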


Let the circle act on an almost complex manifold $(M,J)$. Throughout this paper, assume that the action preserves the almost complex structure $J$. Let $p$ be an isolated fixed point of the action. Then the local action of $S^1$ near $p$ is described by
\begin{center}
$g \cdot (z_1,\cdots,z_n)=(g^{w_{p1}}z_1,\cdots,g^{w_{pn}}z_n)$
\end{center}
for some non-zero integers $w_{pi}$, $1 \leq i \leq n$, where $g \in S^1 \subset \mathbb{C}$, $z_i$ are complex coordinates, and $\dim M=2n$. The integers $w_{pi}$ are called \textbf{weights} at $p$. 

For an almost complex manifold $M$, the Hirzebruch $\chi_y$-genus $\chi_y(M)$ of $M$ is the genus belonging to the power series $\frac{x(1+ye^{-x(1+y)})}{1+e^{-x(1+y)}}$. Let $\chi_y(M)=\sum_{i=0}^n \chi^i(M) y^i$, where $\dim M=2n$. Let the circle act on a compact almost complex manifold $M$ with a discrete fixed point set. Then the Hirzebruch $\chi_y$-genus $\chi_y(M)=\sum_{i=0}^n \chi^i(M) y^i$ of $M$ satisfies $\chi^i(M)=(-1)^i N^i$, where $N^i$ is the number of fixed points which have exactly $i$ negative weights; see \cite{L1}.

In this paper, we propose questions which are related to Kosniowski's conjecture. First, we raise a question if the Hirzebruch $\chi_y$-genus is crowded.

\begin{que} \label{q12} Let the circle act on a compact almost complex manifold $M$ with a discrete fixed point set. Suppose that $\chi^{j_1}(M) \neq 0$ and $\chi^{j_2}(M) \neq 0$ for some non-negative integers $j_1$ and $j_2$. Then for each $j$ between $j_1$ and $j_2$, is $\chi^j(M) \neq 0$?

Alternatively, suppose that there are a fixed point with $j_1$ negative weights and a fixed point with $j_2$ negative weights. Then for each $j$ between $j_1$ and $j_2$, does there exist a fixed point with $j$ negative weights? \end{que}

Note that the answer to Question \ref{q12} is yes in dimension up to 6; see Proposition \ref{p24}.

Question \ref{q12} might be the original motivation for Conjecture \ref{co1}. In fact, in \cite{K1} Kosniowski proves that (a) if a holomorphic vector field (circle action) on a compact complex manifold $M$ has simple isolated zeroes (isolated fixed points), then there exists $i$ such that both of $\chi^i(M)$ and $\chi^{i+1}(M)$ are non-zero. In other words, there exist a non-negative integer $i$ and two simple isolated zeroes (fixed points) $p$ and $q$ such that $p$ has $i$ negative eigenvalues (weights) and $q$ has $i+1$ negative eigenvalues (weights). With this, in \cite{K1} Kosniowski proves that if there are two simple isolated zeroes, then $\dim M=2$ or 6. 

Moreover, (b) the Hirzebruch $\chi_y$-genus is symmetric, that is, $\chi^i(M)=(-1)^{n}\chi^{n-i}(M)$ and hence $N^i=N^{n-i}$ for each $i$, where $\dim M=2n$. In other words, the number of fixed points with $i$ negative weights is equal to the number of fixed points with $n-i$ negative weights for each $i$. Then (a) and (b) imply for instance that if there are three fixed points, then $n$ must be even, and there must be one fixed point with $\frac{n}{2}-1$ negative weights, one fixed point with $\frac{n}{2}$ negative weights, and one fixed point with $\frac{n}{2}+1$ negative weights. From these observations, it is plausible to ask Question \ref{q12}.

More precisely, the following question might be Kosniowski's motivation for his conjecture. For a real number $u$, let $\lceil u \rceil$ be the least integer greater than or equal to $u$.

\begin{que} \label{q13} Let the circle act on a $2n$-dimensional compact almost complex manifold $M$ with a non-empty discrete fixed point set. Then for any $i$ such that $\lfloor \frac{n}{3} \rfloor \leq i \leq \lceil \frac{2n}{3} \rceil$, is $\chi^i(M) \neq 0$? In other words, for any $i$ such that $\lfloor \frac{n}{3} \rfloor \leq i \leq \lceil \frac{2n}{3} \rceil$, does there exist a fixed point with $i$ negative weights? \end{que}

All the known examples of almost complex $S^1$-manifolds with discrete fixed point sets satisfy the properties in Question \ref{q12} and Question \ref{q13}. Therefore, it is possible that combined with the above observations (a) and (b) and the example of $S^6$ and all the known examples, Kosniowski might make Conjecture \ref{co1} based on Question \ref{q12} and Question \ref{q13}.

Suppose that the answer to Question \ref{q13} is affirmative. Then this means there are at least $\lceil \frac{2n}{3} \rceil-\lfloor \frac{n}{3} \rfloor+1$ fixed points. And this means that the dimension of a manifold is bounded above by a linear function $f(k)$ of the number $k$ of fixed points. Then from the 6-dimensional example of a rotation on $S^6$ with 2 fixed points and the 12-dimensional example of $S^6 \times S^6$ with 4 fixed points, $f(k)=4k$ is the sharpest function. This might be how Kosniowski comes up with Conjecture \ref{co1}.

Let the circle act on a compact almost complex manifold $M$ with a fixed point. On each dimension, the following table shows the lower bound $\lfloor \frac{\dim M}{4} \rfloor+1$ for the number of fixed points by Kosniowski, the actual possible minimum (if we know), the minimum number of fixed points by a known manifold, and the known manifold.

\begin{center}
\begin{tabular}{|p{2cm}|p{2cm}|p{2cm}|p{2.7cm}|p{2cm}|}
\hline
\multicolumn{5}{|c|}{Dimension and minimal number of fixed points} \\
\hline
Dimension & Bound by Kosniowski & Actual minimum & Minimum by known manifold & The known manifold \\
\hline
2  & 1 & 2 & 2 & $S^2$ \\
4  & 2 & 3 & 3 & $\mathbb{CP}^2$ \\
6  & 2 & 2 & 2 & $S^6$ \\
8  & 3 & 4 & 4 & $S^2 \times S^6$\\
10 & 3 & ?($\geq$ 4) & 6 & $\mathbb{CP}^5$ or $\mathbb{CP}^2 \times S^6$ \\
12 & 4 & 4 & 4 & $S^6 \times S^6$ \\
14 & 4 & ?($\geq$ 4) & 8 & $\mathbb{CP}^{7}$ or $S^2 \times S^6 \times S^6$ \\
16 & 5 & ? & 9 & $\mathbb{CP}^{8}$ \\
18 & 5 & ? & 8 & $S^6 \times S^6 \times S^6$ \\
20 & 6 & ? & 11 & $\mathbb{CP}^{10}$ \\
22 & 6 & ? & 12 & $\mathbb{CP}^{11}$ \\
24 & 7 & ? & 13 & $\mathbb{CP}^{12}$ \\
$\vdots$ & $\vdots$ & $\vdots$ & $\vdots$ & $\vdots$ \\
$2n$ & $\lfloor\frac{n}{2}\rfloor+1$ & ? & $n+1$ & $\mathbb{CP}^n$ \\
$\vdots$ & $\vdots$ & $\vdots$ & $\vdots$ & $\vdots$ \\
\hline
\end{tabular}
\end{center}

For instance, in any positive dimension the actual minimum is strictly bigger than 1, since a circle action on a compact almost complex manifold can have exactly 1 fixed point only if the manifold is itself the point. On dimension 4, the actual minimum is 3 since two fixed points can only occur in dimension 2 and 6. As we see from the table, in low dimensions $S^6$ serves as a minimal model which provides the minimum number of fixed points among known manifolds, but fails to do so in dimensions beyond 18. In dimensions beyond 18, among known manifolds the complex projective spaces $\mathbb{CP}^n$ provide minimal numbers of fixed points. The classification results mentioned above appear in \cite{J3}.

\begin{theo} \cite{J3} \label{t12} Let the circle act on a compact almost complex manifold $M$. If there is one fixed point, then $M$ is a point. If there are two fixed points, then either $M$ is the 2-sphere or $\dim M=6$. If there are three fixed points, then $\dim M=4$. \end{theo}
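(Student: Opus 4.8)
The plan is to run everything through the Atiyah--Bott--Berline--Vergne (ABBV) localization theorem and the Hirzebruch $\chi_y$-genus. Feeding the class $1 \in H^*_{S^1}(M)$ into ABBV, and using that the equivariant pushforward $\int_M^{S^1}$ lowers degree by $\dim M = 2n$ so that $\int_M^{S^1} 1$ lands in the zero group $H^{-2n}_{S^1}(\mathrm{pt})$ when $n \ge 1$, one obtains the basic identity
\[
\sum_{p \in M^{S^1}} \frac{1}{\prod_{i=1}^n w_{pi}} = 0 \qquad (n \ge 1),
\]
and, running the same machine on equivariant Chern classes, the further identities $\sum_p \frac{P(w_{p1},\dots,w_{pn})}{\prod_i w_{pi}} = 0$ for a range of symmetric polynomials $P$ of low degree. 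On the other side, the $\chi_y$-genus formula recalled above gives $\chi^i(M) = (-1)^i N^i$, and the symmetry $\chi^i(M) = (-1)^n \chi^{n-i}(M)$ then gives $N^i = N^{n-i}$ for every $i$, where $N^i$ is the number of fixed points with exactly $i$ negative weights and $\sum_i N^i = k$ is the total number of fixed points. The sign of $\prod_i w_{pi}$ is $(-1)^{N_p}$, where $N_p$ is the number of negative weights at $p$; this will be used repeatedly together with the basic identity.

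\emph{One and two fixed points.} If $k = 1$ the basic identity reads $1/\prod_i w_{pi} = 0$, impossible for $n \ge 1$, so $n = 0$ and $M$ is a point. If $k = 2$, the basic identity forces the two fixed points $p, q$ to have $\prod_i w_{pi} = -\prod_i w_{qi}$, hence $N_p \not\equiv N_q \pmod 2$; together with $N^i = N^{n-i}$ and $\sum_i N^i = 2$ this excludes $N^{n/2} = 2$ and, when $n$ is even, also excludes $N^j = N^{n-j} = 1$ for $j \ne n/2$. Thus $n$ is odd and the fixed points have $a$ and $n - a$ negative weights with $0 \le a \le \frac{n-1}{2}$. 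It remains to force $n \in \{1, 3\}$: first one rules out $a \ge 1$ (in that case $N^0 = N^n = 0$, so the Todd genus vanishes, and one derives a contradiction from the remaining localization identities and from the weights propagated along the isotropy submanifolds through $p$), leaving the case $a = 0$ of one source and one sink; there the weight multiset at the sink is forced to be the negative of that at the source, and a Chern-number computation — the analogue, in the almost complex setting, of Kosniowski's argument for holomorphic vector fields — yields $n = 1$, so $M = S^2$, or $n = 3$, so $\dim M = 6$.

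\emph{Three fixed points.} Now $\sum_i N^i = 3$. If $n$ were odd the symmetry $N^i = N^{n-i}$ would force $\sum_i N^i$ to be even, contradicting $k = 3$; so $n$ is even and $N^{n/2}$ is odd. If all three fixed points had $n/2$ negative weights, then every $\prod_i w_{pi}$ would have the common sign $(-1)^{n/2}$ and the basic identity would be a sum of three nonzero terms of the same sign — impossible; hence $N^{n/2} = 1$ and there is exactly one more symmetric pair $N^j = N^{n-j} = 1$ with $j \ne n/2$, which the basic identity further constrains to lie at odd distance from $n/2$. A sharper use of the Chern-class localization identities, and of weight propagation along the isotropy $2$-spheres issuing from the three fixed points, is then needed to pin the configuration down to $j = n/2 - 1$ and $n = 2$, so that $\dim M = 4$, as realized by $\mathbb{CP}^2$.

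The main obstacle is precisely the last step in each of the $k = 2$ and $k = 3$ cases: the quick arguments above fix the parity of $n$ and the negative-weight counts but give no a priori upper bound on $n$, and squeezing such a bound out of finitely many localization identities together with the combinatorics of the isotropy graph is where the real difficulty lies — this is the point at which Kosniowski's original argument used holomorphicity, and carrying it out in the almost complex category is the crux of the proof.
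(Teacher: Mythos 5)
This theorem is not proved in the paper at all: it is quoted from \cite{J3}, so there is no in-paper argument to compare against, and your proposal has to stand on its own. The parts you do carry out are correct and standard: the ABBV identity $\sum_{p}1/\prod_i w_{pi}=0$ for $n\ge 1$ disposes of $k=1$, and combined with the symmetry $N^i=N^{n-i}$ and the sign $(-1)^{n_p}$ of $\prod_i w_{pi}$ it correctly forces $n$ odd when $k=2$ and ($n$ even, $N^{n/2}=1$, the remaining pair at odd distance from $n/2$) when $k=3$. But these are only parity constraints; they give no upper bound on $n$, and the entire content of the theorem --- that $k=2$ forces $\dim M\in\{2,6\}$ and $k=3$ forces $\dim M=4$ --- sits precisely in the step you defer to ``a sharper use of the Chern-class localization identities'' and ``weight propagation along isotropy spheres.'' You say so yourself in the last paragraph, so as written this is an outline with the essential step missing, not a proof. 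The tool that actually does the work in \cite{J3} and \cite{K1} is the pairing recalled as Lemma \ref{l21}: the smallest positive weight $w$ matches each occurrence of $+w$ at a fixed point with $i$ negative weights to an occurrence of $-w$ at a fixed point with $i+1$ negative weights. With $k=2$ this immediately forces the two fixed points to have $i$ and $i+1$ negative weights, hence $n=2i+1$, and the case analysis then runs on $i$, not on your parameter $a$.

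There is also a genuine error in your $k=2$ reduction: you claim to rule out $a\ge 1$ and to extract both $n=1$ and $n=3$ from the source--sink case $a=0$. This is backwards. The $6$-dimensional model (Example \ref{e1}) has fixed points with weight multisets $\{-a-b,a,b\}$ and $\{-a,-b,a+b\}$, i.e., with $1$ and $2$ negative weights; its Todd genus $\chi^0(M)=N^0$ vanishes and it has no source and no sink. In fact Lemma \ref{l21} shows a source and a sink cannot be the only two fixed points unless $n=1$: for $n\ge 2$ the identity with $i=0$ forces the source to omit the smallest positive weight $+w$, and the identity with $i=n-1$ forces the sink to omit $-w$, so $w$ occurs nowhere, a contradiction. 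Thus $a=0$ yields only $S^2$, and the hard case you would need to analyze is $a=i\ge 1$ (where one shows $i=1$, $n=3$, with the $S^6$ weights). The $k=3$ sketch has the analogous structure and the same missing core step.
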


In fact, the author proves that if $\dim M=6$ and there are two fixed points, then weights at the fixed points are the same as a rotation on $S^6$ given in Example \ref{e1} below, and if there are three fixed points, then $\dim M=4$ and weights at the fixed points are the same as a standard linear circle action on $\mathbb{CP}^2$ given in Example \ref{e3} below. Theorem \ref{t12} confirms Conjecture \ref{co2} if there are few fixed points, or equivalently, in low dimensions.

\begin{cor} \label{t13} Let the circle act on a compact almost complex manifold $M$ with a non-empty fixed point set. If $\dim M \leq 14$, then there are at least $\lfloor \frac{\dim M}{4} \rfloor+1$ fixed points. \end{cor}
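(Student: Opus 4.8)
The plan is to go through the finitely many even dimensions $2n\le 14$ one at a time and, in each case, compare the Kosniowski bound $\lfloor 2n/4\rfloor+1$ with what Theorem \ref{t12} permits. The key observation is that Theorem \ref{t12} already records exactly which dimensions admit an action with one, two, or three isolated fixed points: one fixed point occurs only for a point, two fixed points only for $S^2$ or in dimension $6$, and three fixed points only in dimension $4$. Since the fixed point set is non-empty, in every dimension there is at least one fixed point, which immediately settles $\dim M=2$, where the bound is $\lfloor 2/4\rfloor+1=1$.

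Next I would treat $\dim M=4$ and $\dim M=6$, where the bound equals $2$: if there were a single fixed point then by Theorem \ref{t12} the manifold would be a point, contradicting $\dim M>0$, so there are at least $2$ fixed points. For $\dim M=8$ and $\dim M=10$ the bound is $3$; having one fixed point forces $M$ to be a point, and having two fixed points forces $M$ to be $S^2$ or $\dim M=6$ by Theorem \ref{t12}, neither of which is compatible with $\dim M\in\{8,10\}$, so $M$ has at least $3$ fixed points (in fact at least $4$, since three fixed points only occur in dimension $4$). Finally, for $\dim M=12$ and $\dim M=14$ the bound is $4$; by Theorem \ref{t12} the cases of one, two, or three fixed points force $\dim M\in\{0,2,6\}$ or $\dim M=4$, none of which equals $12$ or $14$, so $M$ has at least $4$ fixed points. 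This exhausts all dimensions $\le 14$.

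I do not expect a genuine obstacle: the whole argument is a bookkeeping check against Theorem \ref{t12}, and all the real work — the classification of almost complex $S^1$-manifolds with at most three fixed points — is already contained in that theorem. The only mild point to notice is that the Kosniowski bound $\lfloor 2n/4\rfloor+1$ never exceeds $4$ for $\dim M\le 14$, so that Theorem \ref{t12}, which controls the configurations with up to $3$ fixed points, is exactly strong enough; once $\dim M\ge 16$ the bound reaches $5$ and the same strategy would require classifying actions with four fixed points, which is why the statement is limited to $\dim M\le 14$.
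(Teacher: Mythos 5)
Your proposal is correct and is essentially the paper's own argument: the corollary is stated as an immediate consequence of Theorem \ref{t12}, obtained by exactly this dimension-by-dimension comparison of the bound $\lfloor \dim M/4\rfloor+1\leq 4$ against the classification of actions with at most three fixed points. The case check is accurate in every dimension, so nothing further is needed.
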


The proof of the case of three fixed points in Theorem \ref{t12} adapts the proof for symplectic circle action in \cite{J1} and is however very complicated. In addition, the author classifies the case of four fixed points when the dimension of a manifold is at most six \cite{J4}, and this also involves many cases to consider. As the dimension of a manifold or the number of fixed points increases, the classification of a circle action becomes difficult more and more, and so does the Kosniowski's conjecture.

We discuss on the bound $f(k)=4k$ in Conjectures \ref{co1} and \ref{co2}. Among the known examples, the sharpest bound is achieved by the product of $S^6$'s and $\mathbb{CP}^n$'s with standard circle actions.

\begin{exa} \label{e1}
Regarding as $G_2/SU(3)$, the 6-sphere $S^6$ admits a circle action that has precisely two fixed points. The weights at the fixed points are $\{-a-b,a,b\}$ and $\{-a,-b,a+b\}$ for some positive integers $a$ and $b$.
\end{exa}

\begin{exa} \label{e3}
Let the circle act on $\mathbb{CP}^n$ by $g \cdot [z_0:\cdots:z_n]=[g^{a_0}z_0:\cdots:g^{a_n}z_n]$ for mutually distinct positive integers $a_0,\cdots,a_n$. The action has $n+1$ fixed points $[1:0:\cdots:0], \cdots, [0:\cdots:0:1]$. The weights at the fixed points are $\{a_1-a_0,\cdots,a_n-a_0\}, \cdots, \{a_0-a_n,\cdots,a_{n-1}-a_n\}$.
\end{exa}

Given a weight $+w$ which occurs at some fixed point, there exists a fixed point which has weight $-w$; see Proposition 2.11 of \cite{H} or Theorem 3.5 of \cite{L1}. Therefore, in terms of the number of types of weights, the first instance to approach Conjecture \ref{co2} is that all the weights at the fixed points are either $+w$ or $-w$ for some positive integer $w$. In this case, in \cite{J4} the author proves that the number of fixed points is equal to $l \cdot 2^n$ for some positive integer $l$, where $\dim M=2n$; see Lemma \ref{l22}. In particular, the number $l \cdot 2^n$ of fixed points satisfies $\lfloor \frac{\dim M}{4} \rfloor+1 \leq l \cdot 2^n$ in Conjecture \ref{co2}. Therefore, we have that

\begin{pro} \label{t15} Let the circle act on a compact almost complex manifold $M$ with a non-empty discrete fixed point set. If every weight is either $+w$ or $-w$ for some positive integer $w$, then there are $l \cdot 2^n$ fixed points for some positive integer $l$, where $\dim M=2n$. In particular, the number $l \cdot 2^n$ of fixed points satisfies $\lfloor \frac{\dim M}{4} \rfloor+1 \leq l \cdot 2^n$. \end{pro}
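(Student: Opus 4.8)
The statement splits into two parts of very unequal difficulty, and the plan is to reduce everything to Lemma \ref{l22}. For the equality $k = l\cdot 2^n$, the key point is that the weight hypothesis forces the effective action to be semifree: since every weight at every fixed point equals $+w$ or $-w$, the generator of $\mathbb{Z}_w\subset S^1$ fixes a neighborhood of each fixed point and hence fixes all of the connected manifold $M$, so the effective action is by $S^1/\mathbb{Z}_w\cong S^1$ with every weight equal to $\pm 1$. A semifree circle action with isolated fixed points on a $2n$-dimensional almost complex manifold has a number of fixed points divisible by $2^n$ --- this is exactly Lemma \ref{l22}, established in \cite{J4} --- so $k = l\cdot 2^n$ for some positive integer $l$. (If $M$ is a point the assertion is trivial; otherwise $n\ge 1$.)

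It then remains to verify the inequality $\lfloor \frac{\dim M}{4}\rfloor + 1 \le l\cdot 2^n$. Since $\dim M = 2n$ one has $\lfloor \frac{\dim M}{4}\rfloor = \lfloor \frac n2\rfloor$, and because $l\ge 1$ it is enough to show $\lfloor \frac n2\rfloor + 1 \le 2^n$. This is immediate from $\lfloor \frac n2\rfloor + 1 \le n+1 \le 2^n$, where the first inequality uses $\lfloor \frac n2\rfloor\le n$ and the second is a one-line induction on $n$.

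The whole difficulty lies in Lemma \ref{l22}, i.e.\ in the divisibility $2^n \mid k$ for semifree circle actions with isolated fixed points; once that is available, Proposition \ref{t15} is just the observation that the minimal admissible number of fixed points, $2^n$, already dominates Kosniowski's conjectural bound $\lfloor \frac{\dim M}{4}\rfloor+1=\lfloor \frac n2\rfloor+1$, and by an exponentially growing margin. In practice the proof of the proposition should amount to invoking Lemma \ref{l22} and carrying out the two-line estimate above.
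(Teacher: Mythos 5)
Your proof is correct and matches the paper's approach: the paper likewise obtains the count $l\cdot 2^n$ directly from Lemma \ref{l22} and then observes the elementary inequality $\lfloor \frac{n}{2}\rfloor+1\leq 2^n$. Your preliminary reduction to the semifree case is harmless but unnecessary, since Lemma \ref{l22} is already stated for an arbitrary weight $\pm w$, not just $\pm 1$.
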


For a point $x$ in a manifold $M$ equipped with an $S^1$-action, the isotropy group $G_x$ of $S^1$ with respect to $x$ is the group of elements in $S^1$ which act trivially on $x$, i.e., $G_x=\{g \in S^1\, | \,g \cdot x = x\}$. If a fixed point $p$ has a weight $w$, then $p$ has isotropy group $\mathbb{Z}_w$. In other words, Proposition \ref{t15} says that Conjecture \ref{co2} holds if a manifold has three (two if $w=1$) isotropy subgroups of $S^1$, the trivial group $\{e\}$, $\mathbb{Z}_w$ ($\mathbb{Z}_w=\{e\}$ if $w=1$), and the whole group $S^1$.

One of the main goals of this paper is to prove that Conjecture \ref{co2} holds if there are two types of weights. In other words, we prove that Conjecture \ref{co2} holds if there are at most 4 isotropy subgroups of $S^1$, the trivial group $\{e\}$, $\mathbb{Z}_a$, $\mathbb{Z}_b$, and $S^1$ (Some of $\{e\}$, $\mathbb{Z}_a$, and $\mathbb{Z}_b$ may be equal).

\begin{theo} \label{t16} Let the circle act on a compact almost complex manifold $M$ with a non-empty fixed point set. Suppose that every weight is either $\pm a$ or $\pm b$ for some positive integers $a$ and $b$. Then there are at least $\lfloor \frac{\dim M}{4} \rfloor+1$ fixed points. \end{theo}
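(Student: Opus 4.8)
The plan is to reduce, by passing to the fixed submanifolds of suitable cyclic subgroups of $S^1$, to the one--type situation handled by Proposition~\ref{t15}. We may assume $M$ is connected and $\dim M=2n$. First, if $a=b$ then every weight equals $\pm a$, and Proposition~\ref{t15} gives $l\cdot 2^{n}\ge 2^{n}\ge n+1\ge\lfloor n/2\rfloor+1$ fixed points; so assume $a\ne b$. The subgroup $\mathbb{Z}_{\gcd(a,b)}$ acts trivially on every weight space at every fixed point, hence (as $M$ is connected with a fixed point) trivially on $M$, so we may replace $S^1$ by $S^1/\mathbb{Z}_{\gcd(a,b)}$ and assume $\gcd(a,b)=1$. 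For $p\in M^{S^1}$ let $n_a(p)$ and $n_b(p)$ denote the numbers of weights at $p$ equal to $\pm a$ and to $\pm b$; thus $n_a(p)+n_b(p)=n$.

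Suppose first that neither of $a,b$ divides the other (equivalently, since $\gcd(a,b)=1$ and $a\ne b$, that $a,b\ge 2$). Fix $p\in M^{S^1}$ and let $F$ be the connected component of $M^{\mathbb{Z}_a}$ through $p$. Since $a\nmid b$, the tangent space $T_pF=(T_pM)^{\mathbb{Z}_a}$ is exactly the sum of the weight--$(\pm a)$ subspaces of $T_pM$, so $F$ is a closed almost complex submanifold with $\dim_{\mathbb C}F=n_a(p)$ on which $S^1$ acts with every weight equal to $\pm a$ and with $p\in F^{S^1}$, a non-empty discrete set. By Proposition~\ref{t15}, $|F^{S^1}|=l\cdot 2^{n_a(p)}$, and $F^{S^1}\subseteq M^{S^1}$ gives $k\ge 2^{n_a(p)}$. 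Maximizing over $p$ yields $k\ge 2^{\alpha}$ with $\alpha:=\max_p n_a(p)$, and symmetrically (using $b\nmid a$ and $\mathbb{Z}_b$) $k\ge 2^{\beta}$ with $\beta:=\max_p n_b(p)$. Since $\alpha+\beta\ge n_a(p)+n_b(p)=n$ for any single $p$, we get $\max(\alpha,\beta)\ge\lceil n/2\rceil$ and hence $k\ge 2^{\lceil n/2\rceil}\ge\lceil n/2\rceil+1\ge\lfloor n/2\rfloor+1$, as desired.

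It remains to treat the case where one of $a,b$ divides the other; since $\gcd(a,b)=1$ and $a\ne b$, one of them is $1$, so after relabelling $a=1$ and we write $b=c\ge 2$: every weight is $\pm 1$ or $\pm c$. Applying the previous argument with $\mathbb{Z}_c$ (note $c\nmid 1$) gives $k\ge 2^{\beta}$ with $\beta:=\max_p n_c(p)$, where $n_c(p)$ is the number of weights at $p$ equal to $\pm c$. If $\beta\ge\lceil n/2\rceil$ we conclude exactly as above; so we may assume $\beta\le\lceil n/2\rceil-1$, i.e.\ that every fixed point has $n_1(p)=n-n_c(p)\ge\lfloor n/2\rfloor+1$ weights equal to $\pm 1$.

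This last case is where I expect the real difficulty to lie. Now $M^{\mathbb{Z}_c}$ is a disjoint union $\bigsqcup_j Y_j$ of closed almost complex submanifolds of complex dimension $m_j\le\beta$, each $Y_j$ carrying an $S^1$--action with every weight equal to $\pm c$, so that by Proposition~\ref{t15} $|Y_j^{S^1}|=l_j\cdot 2^{m_j}$, $n_c$ is constant equal to $m_j$ on $Y_j^{S^1}$, and $k=\sum_j l_j2^{m_j}$ (sum over the $Y_j$ containing a fixed point); but the normal bundle of each $Y_j$ carries only the weights $\pm 1$ and no non-trivial finite isotropy, so no further constraint is obtained for free and additional input is needed. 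I would argue by induction on $\dim M$, the cases $\dim M\le 14$ being Corollary~\ref{t13}. For the inductive step one must show that $k=\sum_j l_j2^{m_j}$ cannot be too small relative to $n$: Theorem~\ref{t12} immediately forces $\dim M\le 6$ once $k\le 3$, and for larger $k$ one has to bound $\dim M$ by analyzing how the weight--$(\pm 1)$ normal bundles of the $Y_j$ assemble into $M$ — equivalently, by bounding below the number and the dimensions of the connected components of $M^{\mathbb{Z}_c}$ in terms of $n$ — together with the symmetry $N^i=N^{n-i}$ of the Hirzebruch $\chi_y$--genus (where $N^i$ is the number of fixed points with $i$ negative weights). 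The sharpness of the target bound, witnessed by $S^6$ with weights $\pm 1,\pm 2$ ($k=2$, $n=3$) and by $S^6\times S^6$ ($k=4$, $n=6$), means this final argument admits no slack; controlling the normal-bundle assembly of the $Y_j$ is, I believe, the main obstacle.
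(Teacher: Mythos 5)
Your argument for the subcase where (after making the action effective) both $a\ge 2$ and $b\ge 2$ is correct and complete: isolating the $\pm a$-weights and the $\pm b$-weights inside components of $M^{\mathbb{Z}_a}$ and $M^{\mathbb{Z}_b}$ respectively, applying Proposition~\ref{t15} to each, and using $\max(\alpha,\beta)\ge\lceil n/2\rceil$ together with $2^j\ge j+1$ does give $k\ge\lfloor \dim M/4\rfloor+1$. This is a legitimate (and symmetric) variant of the paper's use of $M^{\mathbb{Z}_b}$. You should, however, state explicitly at the outset that the fixed point set may be assumed discrete (otherwise the conclusion is trivial), since Proposition~\ref{t15} needs this.

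The genuine gap is the case you yourself flag: after reduction, one of the weights equals $1$, so the fixed submanifold of $\mathbb{Z}_1$ is all of $M$ and only one of the two cyclic subgroups gives information. Your fallback ideas (induction on dimension, the symmetry $N^i=N^{n-i}$, analyzing how the normal bundles of the components $Y_j$ of $M^{\mathbb{Z}_c}$ assemble) do not close this case, and you correctly sense that no free constraint is available there. The missing ingredient is the paper's Lemma~\ref{l31}, which rests on Lemma~\ref{l21}: for the \emph{smallest} positive weight $w$, the number of occurrences of $+w$ at fixed points with $i$ negative weights equals the number of occurrences of $-w$ at fixed points with $i+1$ negative weights. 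If every fixed point carries at least $\frac{3\dim M}{8}=\frac{3n}{4}$ weights equal to $\pm w$, this forces a chain of fixed points realizing every value of $n_p$ from $\lfloor n/4\rfloor$ up to $\lceil 3n/4\rceil$, hence at least $\lceil 3n/4\rceil-\lfloor n/4\rfloor+1\ge\lfloor\dim M/4\rfloor+1$ fixed points. To plug this into your setup you must also tighten your dichotomy: cutting at $\beta>\lceil n/2\rceil-1$ leaves you only with $n_1(p)\ge\lfloor n/2\rfloor+1$, which is below the $3n/4$ threshold that the chain argument requires; cutting instead at $\beta>n/4$ (so that $k\ge 2^{\lfloor n/4\rfloor+1}$, still ample for $\dim M>14$, with $\dim M\le 14$ handled by Corollary~\ref{t13}) leaves every fixed point with at least $3n/4$ weights equal to $\pm 1$, and then Lemma~\ref{l31} with $w=1$ finishes the proof. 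Without Lemma~\ref{l21} or an equivalent localization/$\chi_y$-genus input, the final case remains unproved.
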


The following theorem confirms Conjecture \ref{co2} if a circle action has a small number of types of weights compared to the dimension of a manifold and the weights are pairwise relatively prime.

\begin{theo} \label{t32} Let the circle act on a compact almost complex manifold with a non-empty fixed point set. Suppose that every weight at any fixed point is equal to $\pm w_i$ for some positive integers $w_i$, $1 \leq i \leq l$, such that $1<w_i$ and $w_i$ are pairwise relatively prime. If $\displaystyle l < \frac{\dim M}{2[(\log_2 \dim M) -2]}$, then there exist at least $\lfloor \frac{\dim M}{4} \rfloor+1$ fixed points. \end{theo}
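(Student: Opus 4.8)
The plan is to reduce to a counting argument on the fixed point data by partitioning the fixed points according to which isotropy subgroups they see, and then applying Proposition \ref{t15} (equivalently Lemma \ref{l22}) on each piece. Here is the idea. Since the $w_i$ are pairwise relatively prime and all bigger than $1$, the isotropy subgroups appearing are exactly $\{e\}$ and the $\mathbb{Z}_{w_i}$ (there is no further collapsing, and no $\mathbb{Z}_{w_i w_j}$ can occur as the isotropy group of a point with a single weight). For a fixed point $p$, its multiset of weights is built from blocks of $\pm w_i$; let $n_i(p)$ be the number of weights of $p$ equal to $\pm w_i$, so $\sum_i n_i(p)=n$ where $\dim M=2n$. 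The key structural input is the ``one $w$ at a time'' mechanism behind Lemma \ref{l22}: fixing all weights except those of one type $\pm w_i$, the submanifold $M^{\mathbb{Z}_{w_i}}$ fixed by $\mathbb{Z}_{w_i}$ is a compact almost complex manifold with an induced circle action whose fixed points are exactly the fixed points of $M$, and on each component of $M^{\mathbb{Z}_{w_i}}$ the weights $\pm w_i$ descend (after dividing by $w_i$) to a circle action with \emph{all} weights equal to $\pm 1$. Proposition \ref{t15} then forces the number of fixed points on each such component to be a multiple of $2^{d}$, where $2d$ is the dimension of that component. Iterating this over $i=1,\dots,l$ shows that the total number $k$ of fixed points is divisible by $2^{m}$ where $m=\max_p \min\{\text{something}\}$ — more precisely, by a careful bookkeeping of the nested submanifolds, one gets that $k$ is divisible by $2^{d}$ where $d$ is the largest integer of the form $n - \sum_{i\in S} n_i(p)$ achievable, and in any case $k \ge 2^{\lceil n/l\rceil}$, since some type $\pm w_i$ must account for at least $n/l$ of the weights at some fixed point, and the corresponding fixed submanifold has dimension at least $2\lceil n/l\rceil$.

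So the first step is to make the divisibility statement precise: prove that $k$, the number of fixed points, is divisible by $2^{c}$ where $c = \max_p \max_i n_i(p) \ge \lceil n/l \rceil$. The second step is purely arithmetic: show $2^{\lceil n/l \rceil} \ge \lfloor n/2 \rfloor + 1 = \lfloor \dim M/4\rfloor+1$ under the hypothesis $l < \frac{\dim M}{2[(\log_2 \dim M)-2]} = \frac{n}{(\log_2(2n))-2}$. Rearranging, the hypothesis gives $n/l > (\log_2 2n) - 2 = \log_2(n/2)$, hence $\lceil n/l\rceil > \log_2(n/2)$, hence $2^{\lceil n/l\rceil} > n/2 \ge \lfloor n/2\rfloor$, so $2^{\lceil n/l\rceil} \ge \lfloor n/2\rfloor + 1$, which is exactly the bound demanded by Conjecture \ref{co2}. (One should double-check the small-$n$ edge cases and the case where some component of a fixed submanifold is a point, which only helps.)

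I expect the main obstacle to be the first step: carefully setting up the iterated fixed-submanifold argument and verifying that Proposition \ref{t15} applies at each stage. The subtleties are (i) that $M^{\mathbb{Z}_{w_i}}$ is genuinely almost complex with the induced $S^1$-action having \emph{isolated} fixed points equal to those of $M$ — this is standard but must be cited or argued (the fixed components are $J$-invariant submanifolds, and the residual circle $S^1/\mathbb{Z}_{w_i}\cong S^1$ acts with the remaining weights); (ii) that after passing to this submanifold and dividing the $\pm w_i$ weights by $w_i$, \emph{every} weight on a given connected component is $\pm 1$ — this uses the pairwise coprimality to ensure that no component of $M^{\mathbb{Z}_{w_i}}$ retains a weight of type $\pm w_j$ with $j \ne i$ that is also divisible by $w_i$ (it never is), so that Proposition \ref{t15}'s hypothesis is met on that component with the rescaled action; and (iii) assembling the per-component divisibilities into a single lower bound for $k$ without losing the factor $2^{\lceil n/l\rceil}$. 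Once the divisibility $2^{\lceil n/l\rceil} \mid k$ is in hand, the rest is the short logarithmic computation above.
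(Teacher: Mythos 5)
Your proposal is essentially the paper's proof: fix a fixed point $p$, note that some weight type $\pm w_i$ occurs at least $n/l$ times at $p$ (so the component of $M^{\mathbb{Z}_{w_i}}$ through $p$ has dimension $2m_i\ge 2n/l$ and, by Lemma~\ref{l22} applied to the induced circle action with its single weight type, carries at least $2^{m_i}$ fixed points of $M$), and then run the same logarithmic rearrangement of the hypothesis to get $\dim M < 4\cdot 2^{n/l}\le 4\cdot|M^{S^1}|$. The one caveat is that your side-claim that the total number $k$ of fixed points is \emph{divisible} by $2^{c}$ with $c=\max_p\max_i n_i(p)$ is neither needed nor true in general (distinct components of $M^{\mathbb{Z}_{w_i}}$ may have different dimensions and contribute multiples of different powers of $2$); the correct and sufficient statement, which you also state, is the lower bound $k\ge 2^{\lceil n/l\rceil}$ obtained from a single component.
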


In particular, Conjecture \ref{co2} holds for three types of weights which are pairwise relatively prime and greater than 1.

\begin{theo} \label{t113} Let the circle act on a compact almost complex manifold $M$ with a non-empty fixed point set. Suppose that every weight is $\pm a_1$, $\pm a_2$, or $\pm a_3$ for some positive integers $a_i$ such that $1<a_i$ and $a_i$ are pairwise relatively prime. Then there exist at least $\lfloor \frac{\dim M}{4} \rfloor+1$ fixed points. \end{theo}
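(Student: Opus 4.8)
The plan is to deduce Theorem~\ref{t113} directly from Theorem~\ref{t32}: for $l=3$ the numerical hypothesis $l < \frac{\dim M}{2[(\log_2 \dim M) - 2]}$ turns out to hold automatically once $\dim M$ is large enough for the right-hand side to make sense, and the remaining low-dimensional cases are already settled. So first I would split into cases by dimension. If $\dim M \leq 14$, then Corollary~\ref{t13} already gives at least $\lfloor \frac{\dim M}{4}\rfloor + 1$ fixed points with no hypothesis on the weights at all, so there is nothing to do. (The only dimensions for which the fraction in Theorem~\ref{t32} is not a well-defined positive number are $\dim M \leq 4$, and these are all covered by Corollary~\ref{t13}, indeed by Theorem~\ref{t12}.) Hence I may assume $\dim M \geq 6$, so that $\log_2 \dim M - 2 > 0$ and the right-hand side of the inequality in Theorem~\ref{t32} is a positive real number.

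Second, I would verify the inequality $3 < \frac{\dim M}{2[(\log_2 \dim M) - 2]}$ for every even $\dim M \geq 6$. Writing $t = \dim M$ and clearing the positive denominator, this is equivalent to $6\log_2 t - 12 < t$, i.e., to $\phi(t) := t - 6\log_2 t + 12 > 0$. A one-variable computation settles this: $\phi'(t) = 1 - \frac{6}{t\ln 2}$ vanishes only at $t_0 = \frac{6}{\ln 2} \approx 8.66$, the function $\phi$ is decreasing on $(0, t_0)$ and increasing on $(t_0, \infty)$, and its global minimum value $\phi(t_0) = \frac{6}{\ln 2} - 6\log_2\!\big(\tfrac{6}{\ln 2}\big) + 12 \approx 1.97$ is positive, so $\phi > 0$ on all of $(0,\infty)$. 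Thus the hypothesis of Theorem~\ref{t32} with $l = 3$ is met for every compact almost complex $S^1$-manifold of dimension at least $6$ whose weights are all $\pm a_1$, $\pm a_2$, or $\pm a_3$ with the $a_i$ pairwise relatively prime and $> 1$; Theorem~\ref{t32} then yields at least $\lfloor \frac{\dim M}{4}\rfloor + 1$ fixed points, and together with the case $\dim M \leq 14$ this proves Theorem~\ref{t113}.

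I do not expect a genuine obstacle in this argument: all of the geometric and combinatorial substance is already packaged inside Theorem~\ref{t32} — which I imagine is proved by combining the identities $N^i = (-1)^i\chi^i(M)$ for the Hirzebruch $\chi_y$-genus with the strong divisibility constraints that pairwise relatively prime weights greater than $1$ impose on the fixed-point data, in the spirit of Lemma~\ref{l22} and Proposition~\ref{t15}, where already a single weight type $\pm w$ forces the number of fixed points to be a multiple of $2^n$. The only thing that really needs checking for Theorem~\ref{t113} itself is that the dimension ranges covered by Corollary~\ref{t13} and by Theorem~\ref{t32} overlap and together exhaust all cases, which is exactly the elementary estimate $\phi(t) > 0$ above; to be safe I would also substitute the small values $\dim M = 6, 8, 10$ directly (for which the left-hand side equals $\approx 5.13$, $4$, $\approx 3.78$, all $> 3$), to make sure no off-by-one in the floor or in the logarithm has crept in.
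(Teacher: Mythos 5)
Your proposal is correct and follows essentially the same route as the paper: reduce to Theorem~\ref{t32} with $l=3$ by checking the numerical inequality $3 < \frac{\dim M}{2[(\log_2 \dim M)-2]}$, with Corollary~\ref{t13} covering the small dimensions. The paper simply invokes Corollary~\ref{t13} for $\dim M \leq 14$ and verifies the (equivalent) inequality $\dim M < 4\cdot 2^{\dim M/6}$ for $\dim M > 14$, whereas you verify it by calculus on all of $\dim M \geq 6$; this is a harmless and correct strengthening of the same argument.
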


\begin{rem} The same conclusion for 4 types of weights as in Theorem \ref{t113} follows immediately from Theorem \ref{t32} except dimension 16. \end{rem}

\section{Preliminaries}

In this section, we review preliminary results to prove our results. In \cite{L1}, Li shows that an equivariant index of Dolbeault-type operators on a compact almost complex manifold equipped with a circle action having a discrete fixed point set is rigid. As a result, Li proves the following. As in the introduction, let $N^i$ be the number of fixed points which have exactly $i$ negative weights.

\begin{theorem} \label{t20} \cite{L1} Let the circle act on a $2n$-dimensional compact almost complex manifold $M$ with isolated fixed points. For each integer $i$ such that $0 \leq i \leq n$,
\begin{center}
$\displaystyle \chi^i(M)=\sum_{p \in M^{S^1}} \frac{\sigma_i (t^{w_{p1}}, \cdots, t^{w_{pn}})}{\prod_{j=1}^n (1-t^{w_{pj}})} = (-1)^i N^i = (-1)^i N^{n-i}$,
\end{center}
where $t$ is an indeterminate, $\sigma_i$ is the $i$-th elementary symmetric polynomial in $n$ variables, and $\chi_y(M)=\sum_{i=0}^n \chi^i(M) \cdot y^i$. \end{theorem}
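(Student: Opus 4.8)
The plan is to realize the three expressions as, respectively, the ordinary index, the equivariant localized index, and the values at the two ``ends'' $t\to 0$ and $t\to\infty$ of a single object: the equivariant $\chi_y$-genus, regarded as a virtual character of $S^1$. Throughout, let $\lambda_p$ denote the number of negative weights at an isolated fixed point $p$, so that $N^i=\#\{p:\lambda_p=i\}$.

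First I would interpret $\chi^i(M)$ as the index of an elliptic operator. For integrable $J$ one has $\chi^i(M)=\sum_q(-1)^q\dim H^q(M,\Omega^i)$, and in general $\chi^i(M)$ is the index of the Dolbeault-type operator $\bar\partial+\bar\partial^*$ acting on $\Omega^{i,\bullet}$, i.e.\ the Dolbeault complex twisted by $\Lambda^i T^{*(1,0)}M$. Since the $S^1$-action preserves $J$, this operator is $S^1$-equivariant, so its equivariant index $\chi^i_{S^1}(M)(t)$ is a virtual character of $S^1$, hence a \emph{Laurent polynomial} in $t$ whose value at $t=1$ recovers $\chi^i(M)$. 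Assembling these into $\chi^{S^1}_y(M)(t)=\sum_{i=0}^n\chi^i_{S^1}(M)(t)\,y^i$, the Atiyah--Bott--Berline--Vergne localization formula (equivalently, the holomorphic Lefschetz fixed point theorem) evaluates the equivariant index over the isolated fixed points. A local computation at $p$, where $T_pM\cong\bigoplus_j\mathbb{C}_{w_{pj}}$, shows the twist by $\Lambda^\bullet T^{*(1,0)}M$ produces the factor $\prod_{j=1}^n\frac{1+y\,t^{w_{pj}}}{1-t^{w_{pj}}}$; since $\prod_j(1+y\,t^{w_{pj}})=\sum_i\sigma_i(t^{w_{p1}},\dots,t^{w_{pn}})\,y^i$, extracting the coefficient of $y^i$ gives the middle expression $\sum_p\frac{\sigma_i(t^{w_{p1}},\dots,t^{w_{pn}})}{\prod_j(1-t^{w_{pj}})}$.

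The crux is \emph{rigidity}: I claim $\chi^i_{S^1}(M)(t)$ is independent of $t$. From the fixed point formula each summand is only a rational function, but I would compute its two limits from the factorized form. As $t\to 0$ a factor $\frac{1+y\,t^{w}}{1-t^{w}}$ tends to $1$ if $w>0$ and to $-y$ if $w<0$; as $t\to\infty$ the roles reverse. Hence each fixed-point contribution, and therefore the whole sum, has finite limits at both ends:
\begin{align*}
\lim_{t\to 0}\chi^{S^1}_y(M)(t)&=\sum_p(-y)^{\lambda_p}, &
\lim_{t\to\infty}\chi^{S^1}_y(M)(t)&=\sum_p(-y)^{n-\lambda_p}.
\end{align*}
But a Laurent polynomial admitting finite limits as $t\to 0$ and as $t\to\infty$ can contain neither negative nor positive powers of $t$, so it is constant, equal to its value $\chi^i(M)$ at $t=1$. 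This proves the first equality.

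Finally I would read off the remaining identities from the two limits, now known to equal the constant $\chi_y(M)$. The limit at $0$ gives $\chi_y(M)=\sum_p(-y)^{\lambda_p}=\sum_{i=0}^n N^i(-y)^i$, and matching the coefficient of $y^i$ yields $\chi^i(M)=(-1)^iN^i$. The limit at $\infty$ gives $\chi_y(M)=\sum_{i=0}^n N^i(-y)^{n-i}$; equating the two polynomial expressions for $\chi_y(M)$ and again comparing the coefficient of $y^i$ gives $N^i=N^{n-i}$, whence $\chi^i(M)=(-1)^iN^i=(-1)^iN^{n-i}$. The main obstacle is the rigidity step, and within it the essential non-formal input is that the equivariant index is a genuine Laurent polynomial: a priori the localization sum is merely rational, and rational functions with finite limits at $0$ and $\infty$ need not be constant, so it is precisely the compactness of $M$ and the ellipticity of the operator, via Atiyah--Singer, that force the conclusion.
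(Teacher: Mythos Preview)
The paper does not prove this theorem; it is stated as a preliminary result with citation \cite{L1} and no proof is given in the text. Your proposed argument is the standard rigidity proof (essentially Li's original approach): interpret the equivariant $\chi_y$-genus as a virtual $S^1$-character, localize to fixed points, observe that each factor $\frac{1+y\,t^{w}}{1-t^{w}}$ has finite limits as $t\to 0$ and $t\to\infty$, and conclude constancy from the fact that a Laurent polynomial with finite limits at both ends is a constant. Your identification of the two limits with $\sum_p(-y)^{\lambda_p}$ and $\sum_p(-y)^{n-\lambda_p}$ is correct and yields the remaining equalities. So your proof is correct and matches what the cited reference does, even though the present paper itself supplies nothing to compare against.
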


Using Theorem \ref{t20}, the author and Tolman prove the following lemma, closely following Kosniowski's idea for a holomorphic vector field on a complex manifold with simple isolated zeroes \cite{K1}. Let circle act on a compact almost complex manifold with a discrete fixed point set. Let $p$ be a fixed point. Let $n_p$ be the number of negative weights at $p$. For each integer $w$, let $N_p(w)$ be the number of times the weight $w$ occurs at the fixed point $p$.

\begin{lem} \cite{JT}, \cite{J2} \label{l21} Let the circle act on a $2n$-dimensional compact almost complex manifold $M$ with a discrete fixed point set. Let $w$ be the smallest positive weight that occurs among all the weights at the fixed points. For each $i \in \{0,1,\cdots,n-1\}$, the number of times the weight $+w$ occurs at fixed points which have exactly $i$ negative weights, counted with multiplicity, is equal to the number of times the weight $-w$ occurs at fixed points which have exactly $i+1$ negative weights, counted with multiplicity. In other words, if $w$ is the smallest positive weight, then for each $0 \leq i \leq n-1$,
\begin{center}
$\displaystyle \sum_{p \in M^{S^1}, n_p=i} N_p(w)=\sum_{p \in M^{S^1}, n_p=i+1} N_p(-w)$.
\end{center} \end{lem}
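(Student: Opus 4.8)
The plan is to extract a single Taylor coefficient from the localization formula of Theorem \ref{t20}, following Kosniowski's idea \cite{K1}. Multiplying the identity for $\chi^i(M)$ by $y^i$, summing over $i$, and using $\sum_{i=0}^n\sigma_i(x_1,\dots,x_n)y^i=\prod_{j=1}^n(1+yx_j)$ together with $\sum_{i=0}^n(-1)^iN^iy^i=\sum_{p\in M^{S^1}}(-y)^{n_p}$, one obtains the identity of rational functions in $t$
\begin{equation*}
\sum_{p\in M^{S^1}}\prod_{j=1}^n\frac{1+yt^{w_{pj}}}{1-t^{w_{pj}}}=\sum_{p\in M^{S^1}}(-y)^{n_p},
\end{equation*}
whose right-hand side is independent of $t$. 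Each summand on the left is a power series in $t$ near $t=0$, so every positive-degree coefficient of the left-hand side must vanish; the point is to read off the coefficient of $t^w$, where $w$ is the smallest positive weight.

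Next I would expand the local factors: if $w_{pj}>0$ then $\frac{1+yt^{w_{pj}}}{1-t^{w_{pj}}}=1+(1+y)t^{w_{pj}}+(1+y)t^{2w_{pj}}+\cdots$, with constant term $1$, while if $w_{pj}=-m<0$ then $\frac{1+yt^{w_{pj}}}{1-t^{w_{pj}}}=-y-(1+y)t^m-(1+y)t^{2m}-\cdots$, with constant term $-y$. Since the multiset of all weights at all fixed points is symmetric under negation (Proposition 2.11 of \cite{H}, or Theorem 3.5 of \cite{L1}), every weight $v$ occurring anywhere satisfies $|v|\ge w$; hence the lowest positive-degree term of each local factor has degree $|w_{pj}|\ge w$. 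Therefore the coefficient of $t^w$ in $\prod_{j=1}^n\frac{1+yt^{w_{pj}}}{1-t^{w_{pj}}}$ is obtained by letting exactly one factor contribute its degree-$w$ term — which is $1+y$ for a factor of weight $+w$, is $-(1+y)$ for a factor of weight $-w$, and is $0$ otherwise — while every other factor contributes its constant term. Careful bookkeeping (noting that removing a $+w$-factor from the constant product leaves $(-y)^{n_p}$, whereas removing a $-w$-factor leaves $(-y)^{n_p-1}$) gives that the coefficient of $t^w$ at $p$ equals $N_p(w)(1+y)(-y)^{n_p}-N_p(-w)(1+y)(-y)^{n_p-1}$.

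Summing this over $p$, setting the result to $0$, cancelling the nonzero polynomial $1+y$, and using $(-y)^{n_p-1}\cdot y=-(-y)^{n_p}$, I arrive at
\begin{equation*}
\sum_{p\in M^{S^1}}N_p(-w)(-y)^{n_p-1}=\sum_{p\in M^{S^1}}N_p(w)(-y)^{n_p}.
\end{equation*}
Grouping both sides according to the value of $n_p$ and comparing the coefficients of $(-y)^i$ yields exactly $\sum_{p\in M^{S^1},\,n_p=i}N_p(w)=\sum_{p\in M^{S^1},\,n_p=i+1}N_p(-w)$ for each $i$, in particular for $0\le i\le n-1$. The $i=0$ case is unproblematic because $N_p(-w)=0$ whenever $n_p=0$, so no negative power of $y$ actually appears on the left-hand side above.

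The step I expect to need the most care is the extraction of the $t^w$-coefficient: one must verify that a factor whose weight is anything other than $\pm w$ cannot contribute a $t^w$-term, and this is precisely where the minimality of $w$ is used for the positive weights and the symmetry of the weight multiset is used for the negative ones; one must also correctly track which constant term ($1$ or $-y$) gets removed from the product when a given factor is the one supplying the $t^w$-term, and handle the $n_p=0$ boundary of the reindexing. Everything else is routine manipulation of power series.
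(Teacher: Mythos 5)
Your proof is correct and follows precisely the route the paper indicates for this lemma: the paper gives no proof of its own but cites \cite{JT}, \cite{J2} and says the result is obtained from Theorem \ref{t20} by following Kosniowski's coefficient-extraction idea, which is exactly what you carry out (including the two points that genuinely need care: that minimality of $w$ together with the symmetry of the weight multiset forces every factor other than a $\pm w$-factor to contribute only its constant term to the $t^w$-coefficient, and the bookkeeping of the constant terms $1$ versus $-y$).
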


Using Theorem \ref{t20}, the author classifies a circle action which has only one type $\pm w$ of weights. It also follows from Lemma \ref{l21}.

\begin{lem} \cite{J4} \label{l22} Let the circle act on a $2n$-dimensional compact almost complex manifold with a non-empty discrete fixed point set. Assume that all the weights are $\pm w$ for some positive integer $w$. Then the number of fixed points is $l \cdot 2^n$ for some positive integer $l$. Moreover, $N^i=l \cdot {n \choose i}$, where $N^i$ is the number of fixed points that have exactly $i$ negative weights. \end{lem}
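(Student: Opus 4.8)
The plan is to derive the lemma directly from Lemma \ref{l21} together with an elementary recursion. Since every weight at every fixed point is $+w$ or $-w$, a fixed point $p$ with exactly $n_p$ negative weights has precisely $n_p$ of its $n$ weights equal to $-w$ and the remaining $n-n_p$ equal to $+w$; hence $N_p(w)=n-n_p$ and $N_p(-w)=n_p$. Because the only weights occurring are $\pm w$, the integer $w$ is the smallest positive weight appearing among all the fixed points, so Lemma \ref{l21} applies verbatim.

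First I would rewrite the two sides of the identity in Lemma \ref{l21}. Fix $i$ with $0\le i\le n-1$. Every fixed point $p$ with $n_p=i$ contributes $N_p(w)=n-i$ to the left-hand side, so that side equals $(n-i)N^i$; every fixed point $p$ with $n_p=i+1$ contributes $N_p(-w)=i+1$ to the right-hand side, so that side equals $(i+1)N^{i+1}$. Therefore
\begin{center}
$(n-i)\,N^i=(i+1)\,N^{i+1}\qquad\text{for all }0\le i\le n-1.$
\end{center}

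Next I would solve this recursion by induction on $i$, obtaining $N^i=\binom{n}{i}N^0$: assuming $N^i=\binom{n}{i}N^0$, the relation gives $N^{i+1}=\frac{n-i}{i+1}N^i=\frac{n-i}{i+1}\binom{n}{i}N^0=\binom{n}{i+1}N^0$. Put $l:=N^0$, a non-negative integer; the recursion forces $l\ge 1$, since $N^0=0$ would yield $N^i=0$ for every $i$, contradicting that the fixed point set is non-empty. Summing over $i$, the total number of fixed points equals $\sum_{i=0}^n N^i=l\sum_{i=0}^n\binom{n}{i}=l\cdot 2^n$, and $N^i=l\binom{n}{i}$, which is exactly the assertion.

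Since Lemma \ref{l21} already encodes the substantive input, there is no genuine obstacle; the only points needing a word are that its hypothesis is satisfied (that $w$ is really the smallest positive weight) and that $l=N^0$ is positive rather than merely non-negative, both of which are immediate. One could instead argue straight from Theorem \ref{t20}, evaluating $\sigma_i$ at the tuple $(t^w,\dots,t^w,t^{-w},\dots,t^{-w})$ for a fixed point with $a$ positive and $n-a$ negative weights and matching coefficients, but that computation is noticeably messier, so I would use the route above.
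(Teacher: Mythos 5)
Your proof is correct and follows exactly the route the paper itself points to: it states that Lemma \ref{l22} ``also follows from Lemma \ref{l21},'' and your computation (identifying $N_p(w)=n-n_p$, $N_p(-w)=n_p$, deriving the recursion $(n-i)N^i=(i+1)N^{i+1}$, and solving it with $l=N^0>0$ forced by non-emptiness) is precisely that argument spelled out. Nothing is missing.
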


An example of a manifold in Lemma \ref{l22} is a diagonal action on the product of $n$-copies of $S^2$'s, where on each $S^2$ the circle acts by rotation at speed $w$. By taking $l$ disjoint copies of such manifolds, the resulting manifold has $l \cdot 2^n$ fixed points. We end this section by showing that the answer to Question \ref{q12} is affirmative in dimension up to 6.

\begin{pro} \label{p24} Let the circle act on a compact almost complex manifold $M$ with a discrete fixed point set. Assume that $\dim M \leq 6$. If $\chi^{j_1}(M) \neq 0$ and $\chi^{j_2}(M) \neq 0$ for some non-negative integers $j_1$ and $j_2$, then for any $j$ between $j_1$ and $j_2$, $\chi^j(M) \neq 0$. \end{pro}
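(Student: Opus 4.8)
The plan is to reduce, via the symmetry $N^i = N^{n-i}$ coming from Theorem \ref{t20}, to a single nontrivial assertion and then dispatch it with Lemma \ref{l21}. Write $\dim M = 2n$, so $n \le 3$, and recall $\chi^i(M) = (-1)^i N^i$. We may assume $j_1 \le j_2$, and the statement is vacuous unless $j_2 \ge j_1 + 2$, which forces $n \ge 2$. Also $\chi^{j_1}(M) \ne 0$ gives $N^{j_1} \ne 0$, so $M^{S^1} \ne \emptyset$; in particular there is a fixed point, and by the weight-pairing fact quoted in the introduction (see Proposition 2.11 of \cite{H}) a positive weight occurs somewhere.

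First I would list the surviving pairs $(j_1, j_2)$ for $n \in \{2, 3\}$: for $n = 2$ only $(0,2)$, and for $n = 3$ the pairs $(0,2)$, $(1,3)$, $(0,3)$. When $n = 3$, the cases $(0,2)$ and $(1,3)$ need no work: there $N^1 = N^2$, so the hypothesis already forces the intermediate genus to be nonzero. The remaining cases, $(0,2)$ with $n = 2$ and $(0,3)$ with $n = 3$, both come down to the single claim
\[
M^{S^1} \ne \emptyset \ \text{ and }\ n \in \{2,3\}\ \Longrightarrow\ N^1 \ne 0,
\]
since for $n = 3$ one has $N^2 = N^1$, so $N^1 \ne 0$ also yields $\chi^2(M) \ne 0$.

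The heart of the proof is this last claim, and it is where Lemma \ref{l21} enters. I would argue by contradiction: suppose $N^1 = 0$. Since $N^{n-1} = N^1$ and there is no further middle index for $n \le 3$, in fact $N^i = 0$ for every $1 \le i \le n-1$. Let $w$ be the smallest positive weight among all fixed points; it is attained at some $p$, so $N_p(w) \ge 1$, and because $p$ carries the positive weight $w$ it has at most $n-1$ negative weights, i.e.\ $n_p \le n-1$. Combined with the vanishing of all intermediate $N^i$, this forces $n_p = 0$, so $\sum_{q \in M^{S^1},\, n_q = 0} N_q(w) \ge N_p(w) \ge 1$. But Lemma \ref{l21} with $i = 0$ equates this sum with $\sum_{q \in M^{S^1},\, n_q = 1} N_q(-w)$, which is $0$ since $N^1 = 0$ — a contradiction. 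Hence $N^1 \ne 0$, and the proposition follows.

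I do not expect a real obstacle: the dimension bound is precisely what makes the symmetry $N^i = N^{n-i}$ collapse the index range, and Lemma \ref{l21} is designed to forbid exactly the pattern of interior zeros with nonzero endpoints. The only care needed is the elementary bookkeeping that $n_p \le n-1$ together with $N^i = 0$ for $1 \le i \le n-1$ leaves $n_p = 0$ as the only possibility, and the initial remark that $\chi^{j_1}(M) \ne 0$ makes the fixed point set non-empty so that "smallest positive weight" is meaningful.
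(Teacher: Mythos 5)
Your proof is correct and uses the same two ingredients as the paper's: the symmetry $N^i=N^{n-i}$ from Theorem \ref{t20} and Lemma \ref{l21} applied to the smallest positive weight. The only difference is organizational --- you collapse everything to the single claim $N^1\neq 0$ and run Lemma \ref{l21} explicitly at $i=0$, whereas the paper invokes the abstract "some consecutive pair $\chi^i,\chi^{i+1}$ is nonzero" and argues case by case; your version is, if anything, slightly more explicit about why that consecutive pair exists.
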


\begin{proof} Let $\dim M=2n$. By Lemma \ref{l21}, there exists $0 \leq i \leq n-1$ such that both of $\chi^i(M)$ and $\chi^{i+1}(M)$ are non-zero. This proves the proposition if $\dim M=2$. By Theorem \ref{t20}, $\chi^i(M)=(-1)^i N^i=(-1)^i N^{n-i}=(-1)^i (-1)^{n-i} \chi^{n-i}(M)=(-1)^n\chi^{n-i}(M)$ for any $0\leq i \leq n$. This proves the proposition if $\dim M=4$.

Suppose that $\dim M=6$. The proposition follows immediately except the following values for $(j_1,j_2)$; $(0,2)$, $(0,3)$, and $(1,3)$. Suppose that $(j_1,j_2)=(0,2)$. This means that $\chi^0(M)\neq 0$ and $\chi^2(M) \neq 0$. By Theorem \ref{t20}, we have that $\chi^3(M)=(-1)^3\chi^0(M) \neq 0$ and $\chi^1(M)=(-1)^3 \chi^2(M) \neq 0$ and hence the proposition follows. The case for $(j_1,j_2)=(1,3)$ is similar. Suppose that $(j_1,j_2)=(0,3)$. By Lemma \ref{l21}, there exists $0 \leq i \leq n-1$ such that both of $\chi^i(M)$ and $\chi^{i+1}(M)$ are non-zero. It follows that either $\chi^1(M) \neq 0$ or $\chi^2(M) \neq 0$. In either case, by Theorem \ref{t20}, we have that $\chi^2(M)=(-1)^3\chi^1(M)\neq 0$. \end{proof}

In particular, Proposition \ref{p24} says that in dimension 4, $\chi^i(M)\neq 0$ ($N^i>0$) for all $0 \leq i \leq 2$, and in dimension 6, either $\chi^1(M)=-\chi^2(M)\neq0$ and $\chi^0(M)=\chi^3(M)=0$ ($N^1=N^2>0$ and $N^0=N^3=0$), or $\chi^i(M) \neq 0$ ($N^i>0$) for all $0 \leq i \leq 3$.

\section{Proofs}

In this section, we prove our main results, Theorem \ref{t16}, Theorem \ref{t32}, and Theorem \ref{t113}. Recall that for a circle action on a compact almost complex manifold with a discrete fixed point set, at each fixed point $p$, $n_p$ denotes the number of negative weights at $p$ and $N_p(w)$ denotes the number of times the weight $w$ occurs at $p$. 

\begin{lemma} \label{l31} Let the circle act on a $2n$-dimensional compact almost complex manifold $M$ with a discrete fixed point set. Let $w$ be the smallest positive weight which occurs as a weight among the weights at all the fixed points. Suppose that the number of times the weights $+w$ and $-w$ occurs at each fixed point is bigger than or equal to $\frac{3 \dim M}{8}$, i.e., $N_p(w)+N_p(-w) \geq \frac{3 \dim M}{8}$ for any $p \in M^{S^1}$. Then for each $i$ such that $\lfloor \frac{n}{4} \rfloor \leq i \leq \lceil \frac{3n}{4} \rceil$, there exists a fixed point with $i$ negative weights. In particular, there are at least $\lfloor \frac{\dim M}{4} \rfloor+1$ fixed points. \end{lemma}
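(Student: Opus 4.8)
The plan is to work entirely with the integers $N^i$, the number of fixed points having exactly $i$ negative weights, using Lemma \ref{l21} together with the symmetry $N^i = N^{n-i}$ of Theorem \ref{t20}; here $\dim M = 2n$, so the hypothesis reads $N_p(w) + N_p(-w) \geq \tfrac{3n}{4}$ for every fixed point $p$. First I would extract two pointwise consequences: at a fixed point $p$ with $n_p$ negative weights, at most $n_p$ of the $\pm w$-weights can equal $-w$ and at most $n - n_p$ can equal $+w$, so $N_p(w) \geq \tfrac{3n}{4} - n_p$ and $N_p(-w) \geq n_p - \tfrac{n}{4}$. Consequently, a fixed point carrying no weight $+w$ satisfies $n_p \geq \lceil \tfrac{3n}{4} \rceil$, and a fixed point carrying no weight $-w$ satisfies $n_p \leq \lfloor \tfrac{n}{4} \rfloor$.

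Next let $S = \{ i : N^i > 0 \}$, a nonempty subset of $\{0, \dots, n\}$ which is symmetric under $i \mapsto n-i$. The engine is: if $i \in S$ but $i + 1 \notin S$ (with $0 \leq i \leq n-1$), then Lemma \ref{l21} gives $\sum_{n_p = i} N_p(w) = \sum_{n_p = i+1} N_p(-w) = 0$, so every fixed point at level $i$ carries no weight $+w$, whence $i \geq \lceil \tfrac{3n}{4} \rceil$; symmetrically, if $i \in S$ but $i - 1 \notin S$ (with $1 \leq i \leq n$), applying Lemma \ref{l21} at level $i-1$ shows every fixed point at level $i$ carries no weight $-w$, whence $i \leq \lfloor \tfrac{n}{4} \rfloor$. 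Applying the second implication to $a := \min S$ (it is trivial when $a = 0$) gives $a \leq \lfloor \tfrac{n}{4} \rfloor$, and by symmetry $b := \max S = n - a \geq \lceil \tfrac{3n}{4} \rceil$.

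It then remains to show $S$ is an interval, for then $S \supseteq [a, b] \supseteq \bigl[\lfloor \tfrac n4 \rfloor, \lceil \tfrac{3n}4 \rceil \bigr]$, which is the first assertion, and the number of fixed points is $\sum_i N^i \geq \lceil \tfrac{3n}{4} \rceil - \lfloor \tfrac{n}{4} \rfloor + 1 \geq \tfrac n2 + 1 \geq \lfloor \tfrac{\dim M}{4} \rfloor + 1$. If $S$ were not an interval, let $c+1$ and $d-1$ be the smallest and largest integers of $[a,b] \setminus S$; then $c, d \in S$ with $c \leq d - 2$, $c \leq n-1$, and $d \geq 1$, so the two implications above force $\lceil \tfrac{3n}{4} \rceil \leq c < d \leq \lfloor \tfrac{n}{4} \rfloor$, which is impossible for $n \geq 1$.

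The pointwise estimates and the floor/ceiling arithmetic are routine; the only place needing attention is organizing this ``gap'' argument so that Lemma \ref{l21} is invoked only at admissible indices $0 \leq i \leq n-1$, and so that the two ends of a would-be gap are far enough apart ($c \leq d-2$) and push in opposite directions to produce the contradiction. That is the crux, but it is bookkeeping rather than a genuine obstacle.
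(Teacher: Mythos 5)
Your proposal is correct and rests on exactly the same engine as the paper's proof: the pointwise observation that $N_p(w)+N_p(-w)\geq \frac{3n}{4}$ forces every fixed point with $n_p<\lceil\frac{3n}{4}\rceil$ to carry $+w$ and every fixed point with $n_p>\lfloor\frac{n}{4}\rfloor$ to carry $-w$, combined with Lemma \ref{l21} to move between adjacent values of $n_p$. The only difference is bookkeeping: the paper walks constructively from an arbitrary fixed point into the middle range and then outward in both directions, whereas you run the contrapositive on the set $S=\{i: N^i>0\}$ (its boundary levels can only sit at the extremes, so $S$ is an interval containing $[\lfloor\frac{n}{4}\rfloor,\lceil\frac{3n}{4}\rceil]$), with the symmetry $N^i=N^{n-i}$ used only as a harmless shortcut for locating $\max S$.
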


\begin{proof}
Pick any fixed point $p$. Then there are three possibilities:
\begin{enumerate}
\item $n_p \leq \lfloor \frac{n}{4} \rfloor$.
\item $\lfloor \frac{n}{4} \rfloor < n_p < \lceil \frac{3n}{4} \rceil$.
\item $\lceil \frac{3n}{4} \rceil \geq n_p$.
\end{enumerate}

We show that in any of the three cases above, the case (2) holds, i.e., there always exists a fixed point $q$ such that $\lfloor \frac{n}{4} \rfloor < n_q < \lceil \frac{3n}{4} \rceil$.

Suppose that the case (1) holds, i.e., $n_p \leq \lfloor \frac{n}{4} \rfloor$. Since $N_p(w)+N_p(-w) \geq \frac{3 \dim M}{8}$ and $n_p \leq \lfloor \frac{n}{4} \rfloor$, this means that $p$ must have $+w$ as a weight. Since $w$ is the smallest positive weight, by Lemma \ref{l22}, this implies that there exists a fixed point $p'$ with $n_{p'}=n_p+1$. If $n_{p'} \leq \lfloor \frac{n}{4} \rfloor$, applying the same argument, there exists a fixed point $p''$ with $n_{p''}=n_{p'}+1=n_p+2$. Continuing this, we conclude that there must exist a fixed point $q$ with $\lfloor \frac{n}{4} \rfloor < n_q < \lceil \frac{3n}{4} \rceil$.

Suppose that the case (3) holds, i.e., $\lceil \frac{3n}{4} \rceil \geq n_p$. Since $N_p(w)+N_p(-w) \geq \frac{3 \dim M}{8}$ and $\lceil \frac{3n}{4} \rceil \geq n_p$, this means that $p$ must have $-w$ as a weight. Since $w$ is the smallest positive weight, by Lemma \ref{l22}, this implies that there exists a fixed point $p'$ with $n_{p'}=n_p-1$. If $\lceil \frac{3n}{4} \rceil \geq n_{p'}$, applying the same argument, there exists a fixed point $p''$ with $n_{p''}=n_{p'}-1=n_p-2$.  Continuing this, we conclude that there must exist a fixed point $q$ with $\lfloor \frac{n}{4} \rfloor < n_q <\lceil \frac{3n}{4} \rceil$.

Therefore, in the three cases above, the case (2) holds; we have that there exists a fixed point $q$ such that $\lfloor \frac{n}{4} \rfloor < n_q <\lceil \frac{3n}{4} \rceil$. Since $N_q(w)+N_q(-w) \geq \frac{3 \dim M}{8}$, this implies that $q$ has $+w$ as weight and $-w$ as weight, i.e., $N_q(w)>0$ and $N_q(-w)>0$. Then the following hold.
\begin{enumerate}[(i)]
\item Since $N_q(w)>0$ and $w$ is the smallest positive weight, by Lemma \ref{l22} this implies that there exists a fixed point $q_1$ with $n_{q_1}=n_q+1$. 
\item Since $N_q(-w)>0$ and $w$ is the smallest positive weight, by Lemma \ref{l22} this implies that there exists a fixed point $q^1$ with $n_{q^1}=n_q-1$.
\end{enumerate}

Next, consider $q_1$. If $\lfloor \frac{n}{4} \rfloor < n_{q_1} < \lceil \frac{3n}{4} \rceil$, by (i) above there exists a fixed point $q_2$ such that $n_{q_2}=n_{q_1}+1$. Next, consider $q_2$. If $\lfloor \frac{n}{4} \rfloor < n_{q_2} < \lceil \frac{3n}{4} \rceil$, by (i) above there exists a fixed point $q_3$ such that $n_{q_3}=n_{q_2}+1$. Continuing this, we have that for all $i$ such that $n_q < i < \lceil \frac{3n}{4} \rceil$, there exists a fixed point $p_i$ such that $n_{p_i}=i$. Finally, consider a fixed point $q_{j_1}$ such that $n_{q_{j_1}}=\lceil \frac{3n}{4} \rceil-1$. Since $\lfloor \frac{n}{4} \rfloor < n_{q_{j_1}} < \lceil \frac{3n}{4} \rceil$, by (i) above there exists a fixed point $q_{j_2}$ such that $n_{q_{j_2}}=n_{q_{j_1}}+1$.

Similarly, consider $q^1$. If $\lfloor \frac{n}{4} \rfloor < n_{q^1} <\lceil \frac{3n}{4} \rceil$, by (ii) above there exists a fixed point $q^2$ such that $n_{q^2}=n_{q^1}-1$. Next, consider $q^2$. If $\lfloor \frac{n}{4} \rfloor < n_{q^2} < \lceil \frac{3n}{4} \rceil$, by (ii) above there exists a fixed point $q^3$ such that $n_{q^3}=n_{q^2}-1$. Continuing this, we have that for all $i$ such that $\lfloor \frac{n}{4} \rfloor < i < n_q$, there exists a fixed point $p_i$ such that $n_{p_i}=i$. Finally, consider a fixed point $q_{j_3}$ such that $n_{q_{j_3}}=\lfloor \frac{n}{4} \rfloor+1$. Since $\lfloor \frac{n}{4} \rfloor < n_{q_{j_3}} <\lceil \frac{3n}{4} \rceil$, by (i) above there exists a fixed point $q_{j_4}$ such that $n_{q_{j_4}}=n_{q_{j_3}}-1$.

Therefore, for $i$ such that $\lfloor \frac{n}{4} \rfloor \leq i \leq \lceil \frac{3n}{4} \rceil$, there exists a fixed point $p_i$ such that $n_{p_i}=i$. It follows that there are at least $\lceil \frac{3n}{4} \rceil-\lfloor \frac{n}{4} \rfloor+1$ fixed points. Since $\lfloor \frac{\dim M}{4} \rfloor+1 \leq \lceil \frac{3n}{4} \rceil-\lfloor \frac{n}{4} \rfloor+1$ for any non-negative integer $n$, it follows in particular that $M$ has at least $\lfloor \frac{\dim M}{4} \rfloor+1$ fixed points. \end{proof}

With Lemma \ref{l31}, we are ready to prove our main result, Theorem \ref{t16}.

\begin{proof}[Proof of Theorem \ref{t16}] The theorem holds if there is a fixed component of positive dimension. By Corollary \ref{t13}, the theorem holds if $\dim M \leq 14$. If $a=b$, then the theorem holds by Proposition \ref{t15}. Without loss of generality, by quotienting out by the subgroup which acts trivially, we may assume that the action is effective. Therefore, from now on, suppose that the fixed point set is discrete, $\dim M > 14$, the action is effective, and $a \neq b$. Since the action is effective, $a$ and $b$ are pairwise prime. Without loss of generality, let $a<b$.

Consider the subgroup $\mathbb{Z}_b$ of $S^1$. As a subgroup of $S^1$, $\mathbb{Z}_b$ acts on $M$. The set $M^{\mathbb{Z}_b}$ of points fixed by the $\mathbb{Z}_b$-action is a union of smaller dimensional compact almost complex manifolds. Let $Z$ be a connected component of $M^{\mathbb{Z}_b}$.

On $Z$, there is an induced action of $S^1/\mathbb{Z}_b=S^1$. If $p$ is a fixed point of the induced action on $Z$, then the weights at $p$ in $T_pZ$ of the induced action are $\pm b$. This means that every weight at a point which is fixed by the induced action on $Z$ is $\pm b$. Applying Lemma \ref{l22} to the induced circle action on $Z$, the induced action on $Z$ has $r \cdot 2^m$ fixed points for some positive integer $r$, where $\dim Z=2m$. In particular, the induced action has at least $2^m$ fixed points.

Denote by $Z^{S^1}$ the set of points in $Z$ that are fixed by the induced $S^1$-action. If $p \in Z^{S^1}$, then $p$ is also fixed by the original $S^1$-action on $M$. Conversely, if $p$ is a point in $Z$ which is fixed by the circle action on $M$, then $p$ is also fixed by the induced $S^1$-action on $Z$. This means that $Z^{S^1}=Z \cap M^{S^1}$.

Suppose that $8m > \dim M$. Since $\dim M > 14$ and $8m > \dim M$, we have that $\dim M < 4 \cdot 2^m$. Since $Z^{S^1}$ consists of at least $2^m$ points and $Z^{S^1}=Z \cap M^{S^1}$, $M$ has at least $2^m$ fixed points. Thus, we have that $\dim M < 4 \cdot 2^m \leq 4 \cdot |M^{S^1}|$ and hence the theorem holds.


Therefore, from now on, suppose that $8m \leq \dim M$. This means that $m \leq \frac{\dim M}{8}$. In other words, this means that at each fixed point $p \in M^{S^1}$, $N_p(+b)+N_b(-b) \leq \frac{\dim M}{8}$. Hence, we have that $N_p(a)+N_p(-a) \geq \frac{3 \dim M}{8}$. By Lemma \ref{l31}, there are at least $\lfloor \frac{\dim M}{4} \rfloor+1$ fixed points. \end{proof}

\begin{proof}[Proof of Theorem \ref{t32}] If there is a fixed component of positive dimension, then the theorem holds. Therefore, from now on, suppose that the fixed point set is discrete. The idea of the proof is to consider the set $M^{\mathbb{Z}_{w_i}}$ of points fixed by the $\mathbb{Z}_{w_i}$-action as in the proof of Theorem \ref{t16}.

Choose a fixed point $p \in M^{S^1}$. Fix $i$. Consider the set $M^{\mathbb{Z}_{w_i}}$ of points fixed by the $\mathbb{Z}_{w_i}$-action, where $\mathbb{Z}_{w_i}$ acts on $M$ as a subgroup of $S^1$. Let $Z_i$ be a connected component of $M^{\mathbb{Z}_{w_i}}$ which contains $p$. Consider the induced action $S^1/\mathbb{Z}_{w_i}=S^1$ on $Z_i$. Let $\dim Z_i=2m_i$.

If $q$ is a point which is fixed by the induced $S^1$-action on $Z_i$, every weight of the induced action on $Z_i$ at $q$ in $T_q Z_i$ is equal to $+w_i$ or $-w_i$, since the weights $w_j$ are pairwise relatively prime. As in the proof of Theorem \ref{t16}, the set $Z_{1}^{S^1}$ of points in $Z_{1}$ fixed by the induced circle action is equal to $Z_{1} \cap M^{S^1}$, i.e., $Z_i^{S^1}=Z_i \cap M^{S^1}$ and hence $Z_i^{S^1}$ is a discrete set. Therefore, applying Lemma \ref{l22} to the induced $S^1$-action on $Z_i$, the induced action on $Z_i$ has $r_i \cdot 2^{m_i}$ fixed points for some positive integer $r_i$. In particular, the induced action on $Z_i$ has at least $2^{m_i}$ fixed points ($m_i$ may be equal to 0 and if $m_i=0$, then $q$ does not have weights $w_i$ or $-w_i$).

Since every weight at $p$ is equal to $\pm w_i$ for some $1 \leq i \leq l$, we have that $m_1 + m_2 + \cdots+ m_l=n$. In particular, we have that $m=\max\{m_1,\cdots,m_l\} \geq \frac{n}{l}$. 

Without loss of generality, assume that $m=m_1$. The induced action of $S^1/\mathbb{Z}_{w_{1}}=S^1$ on $Z_{1}$ has at least $2^{m_{1}}$ fixed points. Since $Z_1^{S^1}=Z_1 \cap M^{S^1} \subset M^{S^1}$, this implies that the circle action on $M$ has at least $2^{m_{1}}$ fixed points.

By the assumption, $\displaystyle l < \frac{\dim M}{2[(\log_2 \dim M) -2]}$. This is equivalent to $\displaystyle \dim M < 4 \cdot 2^{\frac{\dim M}{2l}}$. Therefore, we have that $\displaystyle \dim M < 4 \cdot 2^{\frac{\dim M}{2l}} \leq 4 \cdot 2^{m_{1}} \leq 4 \cdot |M^{S^1}|$. Therefore, there exist at least $\lfloor \frac{\dim M}{4} \rfloor+1$ fixed points. \end{proof}

\begin{proof}[Proof of Theorem \ref{t113}] By Corollary \ref{t13}, the theorem holds if $\dim M \leq 14$. Therefore, assume that $\dim M > 14$. Then we have $\displaystyle \dim M < 4 \cdot 2^{\frac{\dim M}{6}}$, which is equivalent to $\displaystyle 3 < \frac{\dim M}{2[(\log_2 \dim M) -2]}$. Therefore, the theorem follows by Theorem \ref{t32}. \end{proof}

\end{document}